\documentclass[letterpaper, 10 pt, conference]{ieeeconf}  

\IEEEoverridecommandlockouts                              
\usepackage[T1]{fontenc}
\usepackage[utf8]{inputenc}
\usepackage{amsmath, amssymb, amsfonts}

\usepackage{amsthm}
\usepackage{mathtools,thmtools}
\usepackage{bm}
\usepackage{BOONDOX-ds} 
\let\labelindent\relax
\usepackage{enumitem}
\usepackage{booktabs}
\usepackage{graphicx}
\usepackage{tikz}
\usetikzlibrary{arrows.meta, calc, positioning}
\usepackage[colorlinks=true,
            linkcolor=black!70!blue,
            citecolor=black!55!green,
            urlcolor=black!45!cyan]{hyperref}
\usepackage{cleveref}
\usepackage[table, dvipsnames]{xcolor}
\definecolor{LightGray}{rgb}{0.93,0.93,0.93}
\usepackage{cite}
\usepackage{stmaryrd}
\usepackage[acronym]{glossaries} 
\usepackage{caption}
\newacronym{SRG}{SRG}{Scaled Relative Graph}
\newacronym{SIP}{SIP}{Semi-Inner Product}
\newacronym{JMT}{JMT}{James--{Mili\v ci\'c}--Tapia}
\newacronym{MDP}{MDP}{Markov Decision Process}

\declaretheorem[style=definition]{theorem}

\declaretheorem[style=definition]{proposition}
\declaretheorem[style=definition]{lemma}
\declaretheorem[style=definition,qed=$\vartriangle$]{remark}
\declaretheorem[style=definition,qed=$\blacktriangle$]{example}
\declaretheorem[style=definition,numbered=no]{standing assumption}

\declaretheorem[style=definition]{definition}

\newcommand{\R}{\mathbb{R}}

\newcommand{\C}{\mathbb{C}}

\newcommand{\Real}{\operatorname{{Re}}}
\newcommand{\Imag}{\operatorname{{Im}}}
\newcommand{\norm}[1]{\left\lVert #1 \right\rVert}

\newcommand{\sign}{\operatorname{sign}}
\DeclareMathOperator{\diag}{\textup{diag}} 

\newcommand{\graph}{\operatorname{gra}}
\newcommand{\igraph}{\mathrm{i}\!\operatorname{gra}}

\newcommand{\SRGL}{\mathrm{SRG}_L}
\newcommand{\SRGR}{\mathrm{SRG}_R}
\newcommand{\SRG}{\mathrm{SRG}}

\DeclareMathOperator{\lognorm}{\mu}
\DeclareMathOperator{\lab}{\textup{lab}}
\newcommand{\sip}[2]{\left[ #1,\, #2 \right]}
\newcommand{\wpair}[2]{\llbracket #1,\, #2 \rrbracket}
\newcommand{\Iinfty}[1]{\ensuremath{I_{\infty}(#1)}}
\newcommand{\cdotnorm}{\mspace{2mu}\cdot\mspace{2mu}}

\definecolor{AccentTeal}{HTML}{2EC4B6}

\title{Scaled Relative Graphs in Normed Spaces}

\author{
A.~Padoan\thanks{A.~Padoan is with the Department of Electrical and Computer Engineering, University of British Columbia, Vancouver, BC V6T\,1Z4, Canada. \linebreak {E-mail}: {\tt\footnotesize alberto.padoan@ubc.ca}. The author acknowledges the support of the Natural Sciences and Engineering Research Council of Canada (NSERC). Grant numbers: RGPIN-2025-06895 and DGECR-2025-00382.
}}
\date{~}

\begin{document}
\maketitle

\begin{abstract}
The paper extends the \gls{SRG} framework
of Ryu, Hannah, and Yin from Hilbert spaces to normed spaces. Our
extension replaces the inner product with a regular pairing,
whose asymmetry gives rise to directional angles and, in turn,
directional \glspl{SRG}. Directional \glspl{SRG} are shown to provide geometric containment tests certifying key operator properties, including contraction and monotonicity. Calculus rules for \glspl{SRG} under scaling, inversion, addition, and composition are also derived. The theory is illustrated by numerical examples, including a graphical contraction certificate for Bellman operators.
\end{abstract}

\glsreset{SRG}
\glsreset{SIP}

\section{Introduction}
\label{sec:intro}

Graphical methods have a long and distinguished history in the analysis of feedback systems~\cite{nyquist1932,bode1945,popov1961,zames1966,megretski1997}, offering intuition that purely analytic treatments often obscure. 
The \gls{SRG}, recently proposed by Ryu, Hannah, and Yin~\cite{ryu2022scaled}, is a powerful heir to this tradition. 
The \gls{SRG} associates 
to an operator $T$ on a Hilbert space, with graph $\graph(T)$,  a subset of the extended complex plane, defined as
\begin{equation} \label{eq:srg_hilbert}
\!\!\SRG(T) \!=\! \left\{ \frac{\norm{y}}{\norm{x}} e^{\pm i \angle(x,y)}
\middle|\, (x,y) \in \graph(T) - \graph(T) \right\}\! , \!\!
\end{equation}
where $\norm{\cdotnorm}$ and $\angle(\cdot\,,\cdot)$ are the norm and angle induced by the inner product, respectively. The \gls{SRG} captures key operator properties, such as contraction and monotonicity, in a form amenable to geometric reasoning~\cite{ryu2022scaled}. The result is a unified framework for the convergence analysis of a broad class of algorithms~\cite{facchinei2003,beck2017,ryu2022book,bauschke2017}, including first-order methods and their proximal variants~\cite{beck2017,ryu2022book,bauschke2017}, operator splitting schemes~\cite{lions1979,ryu2022book,bauschke2017}, and monotone inclusion problems~\cite{facchinei2003,ryu2022book,bauschke2017}, as well as compositional stability tests reminiscent of the Nyquist criterion for general nonlinear 
operators~\cite{chaffey2023graphical}.

A fundamental limitation of the existing theory is that the \gls{SRG} has been defined only for operators on Hilbert spaces~\cite{chaffey2023graphical,pates2021,ryu2022scaled,chaffey2022rolledoff,chaffey2022circuit,krebbekx2025lure,vandeneijnden2025phase,baronprada2025mixed,baronprada2025mimo, nauta2026computing}, whose privileged role stems from the inner-product structure underlying the analysis of both optimization algorithms~\cite{ryu2022book,bauschke2017} and feedback systems~\cite{chaffey2023graphical}.
In many applications, however, signals are naturally sparse or
bounded, making $\ell^1$ and~$\ell^\infty$ more natural norms
than~$\ell^2$. Neither is induced by an inner product, so the entire \gls{SRG} toolkit --- convergence certificates, stability tests, and compositional calculus --- is unavailable for operators on these spaces. Examples include
$\ell^1$ optimal
controllers~\cite{dahleh1994control}, Bellman
operators in $\ell^\infty$ from dynamic programming~\cite{bertsekas2012dynamic,sutton2018reinforcement}, and fixed-point iterations in Banach spaces~\cite{Zeidler1986,Chidume2009}.

The paper extends the \gls{SRG} to normed spaces by replacing the
inner product with a \emph{regular pairing}~\cite{DavydovJafarpourBullo2022,proskurnikov2024regular,bullo2024,davydov2024monotone},
which exists in every normed space, but need not be
symmetric. The resulting \gls{SRG} framework offers graphical characterizations of operator properties and a compositional calculus for incremental stability analysis in non-Euclidean geometries.

\emph{Contributions.}
First, we identify regular pairings as the natural replacement
for the inner product and show that their asymmetry captures
the polyhedral geometry of the $\ell^1$ and $\ell^\infty$ unit balls
via sign-pattern and facet transitions (Section~\ref{sec:sip}).
Second, we introduce \emph{directional cosines} and \emph{angles}, from which we derive a gain--phase decomposition of the logarithmic norm and a phase
characterization of monotonicity (Section~\ref{sec:geometry}).
Third, we define \emph{directional \glspl{SRG}} in normed spaces,
characterize operator properties via geometric containment tests, and establish a calculus of \gls{SRG} operations extending that of~\cite{ryu2022scaled,chaffey2023graphical} (Section~\ref{sec:srg}). Fourth, we illustrate numerically that directional \glspl{SRG}
certify monotonicity for linear and nonlinear operators and yield
tighter contraction certificates than Lipschitz bounds for Bellman
operators in $\ell^\infty$ (Section~\ref{sec:numerical}).

\emph{Paper organization.}
Section~\ref{sec:sip} reviews weak and regular pairings from~\cite{DavydovJafarpourBullo2022,proskurnikov2024regular,bullo2024}. 
Section~\ref{sec:geometry} introduces directional cosines and
angles, develops their polyhedral geometry in $\ell^1(\R^n)$ and $\ell^\infty(\R^n)$, and derives operator-phase
characterizations.  Section~\ref{sec:srg} defines directional
\glspl{SRG}, relates operator properties to geometric containment
conditions, and establishes the \gls{SRG} calculus.
Section~\ref{sec:numerical} presents our numerical case studies.
All proofs are collected in the Appendix.

\emph{Notation.}
Our conventions largely
follow~\cite{ryu2022book,chaffey2023graphical,proskurnikov2024regular}.
Standard basis vectors of $\R^n$ are $e_1,\dots,e_n$ and
the transpose of ${A\in\R^{p\times m}}$ is  $A^\top$. 
The imaginary unit is ${\textrm{i}\in\C}$. The real and imaginary parts, complex conjugate, modulus, and argument of ${z\in\C}$ are $\Real z$, $\Imag z$, $\bar z$, $|z|$,
and $\angle z$, respectively. The {extended complex
plane} is ${\bar\C=\C\cup\{\infty\}}$, with arithmetic rules as
in~\cite{ryu2022scaled,chaffey2023graphical}.  
An operator $T$ on a set $X$ is a possibly set-valued map, identified with its graph ${\graph(T) = \{(x,y) \mid y \in T(x)\}}$. We denote by~$I$ the identity operator and define
${\igraph(T)=\graph(T)-\graph(T)}$.
Scaling, inversion, addition, and composition of operators are defined
as in~\cite{ryu2022book,chaffey2023graphical}.
In a normed space ${(X,\|\cdot\|)}$, the unit sphere is ${S_X=\{x\in X \mid \|x\|=1\}}$ and 
the {logarithmic norm} of a bounded linear operator $A$
is $\mu(A)=\lim_{h\to 0^+}(\|I+hA\|-1)/h$~\cite{bullo2024}.
For $p\in[1,\infty]$, $\|\cdot\|_p$ is the $\ell^p$ norm
on~$\R^n$ and $\mu_p(A)$ is the induced logarithmic norm of $A\in\R^{n\times n}$. For $x\in\R^n$, ${\Iinfty{x}=\{i\mid |x_i|=\|x\|_\infty\}}$
is the set of {peak indices} and $m_x=\min\Iinfty{x}$ is the
{minimal peak index}~\cite{proskurnikov2024regular}. The {map} ${\sign\colon\R^n\to\R^n}$ acts componentwise:
$(\sign(x))_i = x_i/|x_i|$ if $x_i\neq 0$ and zero otherwise.

\section{Weak and Regular Pairings}
\label{sec:sip}

The Jordan--von Neumann theorem~\cite{jordan1935} establishes that a real normed space $(X,\norm{\cdotnorm})$ is an inner product space if and only if its norm satisfies the \textit{parallelogram identity}
\begin{equation}
  \label{eq:parallelogram}
  \|x+y\|^2 + \|x-y\|^2 = 2\|x\|^2 + 2\|y\|^2,
  \quad \forall\, x,y \in X.
\end{equation}
The identity~\eqref{eq:parallelogram} fails, \textit{e.g.}, in $\ell^1(\R^n)$ and $\ell^\infty(\R^n)$ for ${n \ge 2}$, so neither space admits an inner product. Lumer~\cite{lumer1961semi} and 
Giles~\cite{giles1967classes} introduced \glspl{SIP} as natural 
substitutes for inner products in normed spaces. More general notions have since been proposed~\cite{DavydovJafarpourBullo2022,davydov2024monotone,bullo2024,proskurnikov2024regular}; in this work, we leverage the hierarchy of
\textit{regular pairings} studied in~\cite{proskurnikov2024regular}.

A \emph{weak pairing} is a binary operation ${\wpair{\cdot}{\cdot}: X \times X \to \R}$ satisfying subadditivity in the first argument, weak homogeneity, positive definiteness, and the Cauchy--Schwarz inequality~\cite{DavydovJafarpourBullo2022,davydov2024monotone,bullo2024,proskurnikov2024regular}. A weak pairing $\wpair{\cdot}{\cdot}$ is \emph{compatible} with the norm if ${\wpair{x}{x}=\|x\|^2}$ for all ${x\in X}$. \emph{Regular pairings} are a distinguished class of weak pairings, characterized by several equivalent conditions~\cite[Thm.~3.5]{proskurnikov2024regular}, including
\begin{enumerate}[label=\textup{(R\arabic*)}, noitemsep, leftmargin=*]
\item\label{rwp:lumer}
        \textit{Lumer's identity} for any bounded linear operator $A$,
        \begin{equation} \label{eq:lumer} 
        \displaystyle \mu(A)=\sup_{x\in S_X}\wpair{Ax}{x} ,            
        \end{equation}
  \item\label{rwp:straight}
        the \textit{straight angle property},
        \begin{equation} \label{eq:straight_angle}
            \wpair{-x}{x}=-\norm{x}^2 \quad \forall x\in X ,
        \end{equation}
   \item\label{rwp:linear}
           and \textit{partial linearity},
          \begin{equation} \label{eq:partial_linearity}
              \wpair{x+ay}{y}=\wpair{x}{y}+a\norm{y}^2 \ \, \forall \, x,y\in X, \, a\in\R .
          \end{equation}
\end{enumerate}

\noindent
Every \gls{SIP} in the sense of 
Lumer~\cite{lumer1961semi} and Giles~\cite{giles1967classes} is a 
regular pairing that is linear (not merely subadditive) in its first 
argument~\cite{proskurnikov2024regular}.  It is well known that every normed space admits at least one \gls{SIP} compatible with the norm~\cite{lumer1961semi,giles1967classes,proskurnikov2024regular}. Another important class 
of regular pairings arises directly from the norm: the \textit{upper} and \textit{lower 
\gls{JMT} pairings}~\cite{james1947orthogonality,milicic1971semiproduct,
proskurnikov2024regular}, defined as
\begin{equation}
\label{eq:jmt}
  \sip{x}{y}_{\pm}
  \,=\,
  \norm{y}\lim_{t\to 0^{\pm}}\frac{\norm{y+tx}-\norm{y}}{t},
  \quad \forall\,x,y\in X.
\end{equation}
The \gls{JMT} pairings capture a key sensitivity property: 
$\sip{x}{y}_\pm$ are one-sided directional derivatives of $\norm{y}$ along the 
direction~$x$, scaled by $\norm{y}$.  If the norm is G\^ateaux differentiable, the upper and lower \gls{JMT} pairings coincide; in this case, the norm is induced by a \textit{unique} regular pairing, \textit{i.e.}, the only \gls{SIP} compatible with the norm~\cite{proskurnikov2024regular}. For polyhedral norms, such as 
$\ell^1$ and $\ell^\infty$, G\^ateaux differentiability fails and 
multiple regular pairings coexist~\cite{proskurnikov2024regular}.
Table~\ref{tab:pairings} summarizes selected regular pairings for $\ell^p(\R^n)$ from~\cite{proskurnikov2024regular}.

\begin{table}[h!]
\centering
\rowcolors{2}{LightGray}{white}
\small
\resizebox{\linewidth}{!}{%
\renewcommand{\arraystretch}{1.} %
\begin{tabular}{%
p{0.07\linewidth}%
p{0.51\linewidth}%
p{0.32\linewidth}%
}
\toprule
Norm & Regular pairing & Logarithmic norm \\
\midrule
$\begin{aligned}
\|x\|_2
\end{aligned}$
&
$\begin{aligned}
\wpair{x}{y}_2 &= y^\top x
\end{aligned}$
&
$\begin{aligned}
\tfrac{1}{2}\lambda_{\max}(A+A^\top)
\end{aligned}$
\\[.5ex]

$\begin{aligned}
\|x\|_1
\end{aligned}$
&
$\begin{aligned}
\wpair{x}{y}_1 &= \|y\|_1\sign(y)^\top x
\end{aligned}$
&
$\begin{aligned}
\max_j \Big(a_{jj}+\sum_{i\neq j}|a_{ij}|\Big)
\end{aligned}$
\\[.5ex]

$\begin{aligned}
\|x\|_\infty
\end{aligned}$
&
$\begin{aligned}
\wpair{x}{y}_\infty &= \max_{i\in\Iinfty{y}} x_i y_i
\end{aligned}$
&
$\begin{aligned}
\max_i \Big(a_{ii}+\sum_{j\neq i}|a_{ij}|\Big)
\end{aligned}$
\\[.5ex]

$\begin{aligned}
\|x\|_\infty
\end{aligned}$
&
$\begin{aligned}
\wpair{x}{y}_{\infty,m}
&= \|y\|_\infty \sign(y_{m_y})x_{m_y}
\end{aligned}$
&
$\begin{aligned}
\max_i \Big(a_{ii}+\sum_{j\neq i}|a_{ij}|\Big)
\end{aligned}$
\\
\bottomrule
\end{tabular}%
}
\caption{Norms, regular pairings, and logarithmic norms in $\ell^p(\R^n)$~\cite{proskurnikov2024regular}.}
\label{tab:pairings}
\end{table}

For $\ell^2(\R^n)$, the unique regular pairing $\wpair{x}{y}_{2}$
is the Euclidean inner product. For $\ell^1(\R^n)$, the \emph{sign
pairing} $\wpair{x}{y}_1$ is both a \gls{SIP} and a \gls{JMT}
pairing. For $\ell^\infty(\R^n)$, several choices are possible. The
\emph{max pairing} $\wpair{x}{y}_{\infty}$ is the upper \gls{JMT}
pairing and is permutation-invariant, but not a \gls{SIP}. The
\emph{min-index pairing} $\wpair{x}{y}_{\infty,m}$ is a \gls{SIP},
but neither a \gls{JMT} pairing nor permutation-invariant. Since
all regular pairings satisfy Lumer's
identity~\eqref{eq:lumer}, the choice does not affect the
logarithmic norm; it does, however, affect angles and \glspl{SRG}.

\begin{example}[{Sign, max, and min-index  pairings in $\R^2$}]
\label{ex:sip_R2}
Let ${x=(1, 0.5)}$ and ${y=(0.3, 1)}$. Then
${\norm{x}_1=1.5}$, ${\norm{y}_1=1.3}$, ${\norm{x}_\infty=1}$, ${\norm{y}_\infty=1}$,
${\Iinfty{x}=\{1\}}$, and  ${\Iinfty{y}=\{2\}}$.  Evaluating sign, max, and min-index pairings yields
\begin{equation}\nonumber
\begin{array}{rcl}
  \wpair{x}{y}_1 = \!&\!1.95\!&\! = \wpair{y}{x}_1,\\
  \wpair{x}{y}_\infty =  0.5 \!\!&\!\!\ne\!\!&\!\! 0.3 =  \wpair{y}{x}_\infty,\\
  \wpair{x}{y}_{\infty,m} = 0.5 \!\!&\!\!\ne\!\!&\!\! 0.3 = \wpair{y}{x}_{\infty,m}.
\end{array}
\end{equation}
The sign pairing value is symmetric, as $\sign(x)=\sign(y)$; this is
a coincidence, not a general property. Both $\ell^\infty$ regular pairing values are
asymmetric, reflecting that $x$ and $y$ attain their peak values at different components.
\end{example}

All our results depend on the choice of norm and regular
pairing, which we fix throughout unless otherwise stated.

\begin{standing assumption}
\label{sa:pairing}
The pair $(X,\|\cdot\|)$ is a real normed space equipped with a compatible regular pairing $\wpair{\cdot}{\cdot}$.
\end{standing assumption}

\section{Directional Angles and Cosines}
\label{sec:geometry}

In a real inner product space $(X,\langle\cdot,\cdot\rangle)$, the angle
\begin{equation}
\label{eq:angle_inner_product}
\angle(x,y) = \arccos\!\left(\frac{\langle x,y\rangle}{\norm{x}\norm{y}}\right)
\end{equation}
is symmetric, i.e., $\angle(x,y)=\angle(y,x)$. By contrast, in a normed space $(X,\|\cdot\|)$ a compatible regular pairing need not be symmetric, giving rise to \emph{directional} angles.

\begin{definition}[Left cosines and angles] \label{def:left_cosines_and_angles}
The \emph{left cosine} between nonzero vectors $x,y \in X$ is 
\begin{equation}
  \label{eq:cosines}
  \cos_L(x,y)
  \,=\, \frac{\wpair{y}{x}}{\norm{x}\norm{y}},
\end{equation}
and the corresponding \emph{left angle} between $x$ and $y$ is
\begin{equation}
\label{eq:angles}
  \angle_L(x,y) \,=\, \arccos\bigl(\cos_L(x,y)\bigr) .
\end{equation}
\end{definition}

\begin{figure*}[t!]
\centering

\begin{tikzpicture}[scale=1.15, >=Stealth]
  \begin{scope}[xshift=-3.5cm]
    \filldraw[fill=gray!10!white, draw=gray!50!white, thick]
      (1,0)--(0,1)--(-1,0)--(0,-1)--cycle;
    \draw[->](-1.35,0)--(1.45,0) node[right]{\small$x_1$};
    \draw[->](0,-1.35)--(0,1.45) node[above]{\small$x_2$};
    \draw[->,thick,red](0,0)--(0.65,0.35)
          node[yshift=-0.3cm,font=\small]{$x$};
    \draw[->,dashdotted,thick,blue!60!black](0,0)--(0.2,0.8)
          node[yshift=-0.4cm,xshift=0.1cm,font=\small]{$y$};
    \draw[->,thick,white!70!black](0.2,0.8)--++(0.22,0.22)
          node[yshift=0.1cm,xshift=0.6cm,font=\small]{$\sign(y)$};
    \draw[->,thick,red!70!black](0.65,0.35)--++(0.22,0.22)
          node[yshift=0.0cm,xshift=0.6cm,font=\small]{$\sign(x)$};
    \draw[red!70!black,ultra thick](1,0)--(0,1);
    \draw[blue!60!black, dashed, ultra thick](1,0)--(0,1);
    \draw[-stealth,orange!80!black,very thick,bend right=25,xshift=0.1cm,yshift=-0.2cm]
          (0.55,0.72)to(0.22,1.02);
    \node[orange!80!black,font=\small,xshift=.1cm,yshift=-.25cm] at (0.62,1.0){$\mathsf{S}$};
    \node[font=\small,align=center] at (0,-1.7)
      {$\ell^1$: sign-pattern transition \\
       \textcolor{orange!80!black}{$\mathsf{S}\colon(+,+)\mapsto(+,+)$}};
  \end{scope}

  \begin{scope}[xshift=1.5cm]
    \filldraw[fill=gray!10!white, draw=gray!50!white,thick](0,0) circle(1);
    \draw[->](-1.35,0)--(1.45,0) node[right]{\small$x_1$};
    \draw[->](0,-1.35)--(0,1.45) node[above]{\small$x_2$};
    \draw[->,thick,red](0,0)--({cos(28)},{sin(28)})
          node[xshift=-.1cm,yshift=-.4cm,font=\small]{$x$};
    \draw[->,dashdotted,thick,blue!60!black](0,0)--({cos(78)},{sin(78)})
          node[xshift=.2cm,yshift=-.25cm, font=\small]{$y$};
    \draw[->,thick,white!70!black]({cos(78)},{sin(78)})--
          ++({0.28*cos(78)},{0.28*sin(78)})
          node[xshift=.6cm,yshift=.1cm,font=\small]{${y/\!\norm{y}_2}$};
    \draw[->,thick,red!70!black]({cos(28)},{sin(28)})--
          ++({0.28*cos(28)},{0.28*sin(28)})
          node[xshift=.6cm,font=\small]{${x/\!\norm{x}_2}$};
    \draw[-stealth,orange!70!black,very thick]
          ({0.45*cos(28)},{0.45*sin(28)}) arc(28:78:0.45);
    \node[orange!70!black,font=\small,xshift=0.35cm,yshift=-0.125cm] at (0.12,0.60){$\mathsf{R}$};
    \node[font=\small,align=center] at (0,-1.7)
      {$\ell^2$: continuous rotation \\
       \textcolor{orange!80!black}{$\mathsf{R}\colon x\mapsto
       \bigl[\begin{smallmatrix}
       \cos\phi & {-}\sin\phi \\
       \sin\phi & ~~\cos\phi\end{smallmatrix}\bigr]x$}};
  \end{scope}

  \begin{scope}[xshift=7cm]
    \filldraw[fill=gray!10!white, draw=gray!50!white, thick]
      (-1,-1)--(1,-1)--(1,1)--(-1,1)--cycle;
    \draw[->](-1.35,0)--(1.45,0) node[right]{\small$x_1$};
    \draw[->](0,-1.35)--(0,1.45) node[above]{\small$x_2$};
    \draw[->,thick,red](0,0)--(1,0.42)
          node[xshift=-0.2cm,yshift=-0.35cm,font=\small]{$x$};
    \draw[->,dashdotted,thick,blue!60!black](0,0)--(0.28,1)
          node[xshift=-0.425cm,yshift=-0.5cm,font=\small]{$y$};
    \draw[->,thick,white!70!black](0.28,1)--++(0,0.28)
          node[xshift=0.9cm,yshift=0.2cm,font=\small]{$\sign(y_{m_x})e_{m_x}$};
    \draw[->,thick,red!70!black](1,0.42)--++(0.28,0)
          node[xshift=0.8cm,yshift=0.3cm,font=\small]{$\sign(x_{m_x})e_{m_x}$};
    \draw[red!70!black,ultra thick](1,-1)--(1,1);
    \node[red!70!black,below,font=\small] at (1.22,0){$F_1^+$};
    \draw[blue!60!black,ultra thick](-1,1)--(1,1);
    \node[blue!60!black,left,font=\small] at (0,1.22){$F_2^+$};
    \draw[-stealth,very thick,orange!80!black,bend left=40]
          (1.,0.5)to(0.35,1.);
    \node[orange!80!black,xshift=-.55cm,yshift=-0.05cm,font=\small] at (1.18,0.85){$\mathsf{F}$};
    \draw[red!50!black,dashed,thick](0.28,1)--(0.28,0);
    \filldraw[red!50!black](0.28,0) circle(1.4pt)
          node[xshift=.1cm,yshift=-.35cm,font=\small]{$y_{m_x}$};
    \node[font=\small,align=center] at (0,-1.7)
      {$\ell^\infty$: facet transition \\
       \textcolor{orange!80!black}{$\mathsf{F}\colon F_1^+\mapsto F_2^+$}};
  \end{scope}

\end{tikzpicture}
\caption{%
Directional angles in $\ell^1$ (left), $\ell^2$
(center), and $\ell^\infty$ (right) between unit-norm vectors 
\textcolor{red}{$x$} (solid) and \textcolor{blue}{$y$} (dashdotted) in $\R^2$.  
The outward normals at \textcolor{red}{$x$} are 
\textcolor{red!70!black}{$\sign(x)$}  in $\ell^1$, 
\textcolor{red!70!black}{$x/\norm{x}_2$}  in $\ell^2$, and 
\textcolor{red!70!black}{$\sign(x_{m_x})e_{m_x}$} in $\ell^\infty$ with the min-index pairing.  
In $\ell^1$, an angle encodes transitions between sign patterns on the unit cross-polytope;
in $\ell^2$, it encodes continuous rotations on the unit sphere;
in $\ell^\infty$, it encodes transitions between active facets of the unit hypercube.
}
\label{fig:phase_transitions}
\end{figure*}
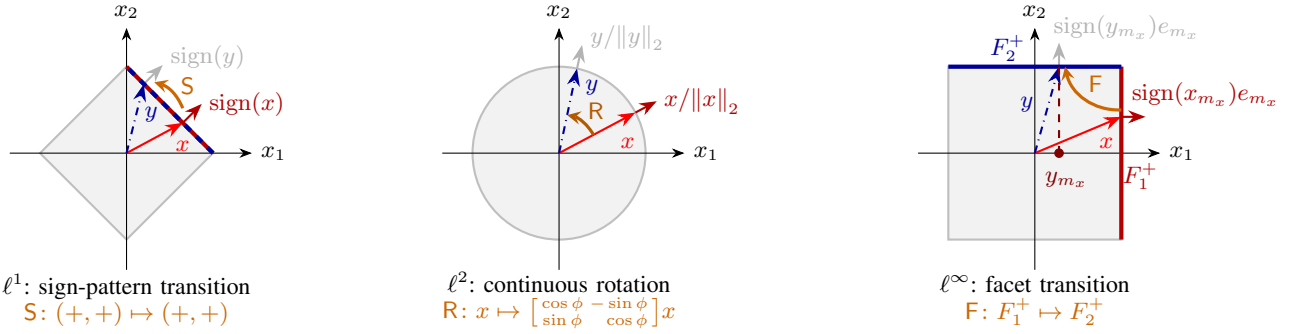

\noindent
By the Cauchy--Schwarz inequality, ${\cos_L(x,y)\in[-1,1]}$ for all nonzero ${x,y\in X}$, so the left angle is well defined. The \emph{right} cosine and angle are defined by swapping arguments, namely  ${\cos_R(x,y)=\cos_L(y,x)}$ and ${\angle_R(x,y)=\angle_L(y,x)}$ for all nonzero ${x,y\in X}$. Angles and cosines are called \emph{directional} if no distinction between left and right is needed. 
When $\wpair{\cdot}{\cdot}$ is a \gls{SIP}, linearity in the first
argument gives ${\cos_L(x,-y) = -\cos_L(x,y)}$ for all nonzero
${x, y \in X}$. When  $\wpair{\cdot}{\cdot}$ is an inner product, directional angles coincide, since it is both symmetric and the unique regular pairing
compatible with the norm~\cite{proskurnikov2024regular}.
The following result summarizes useful properties of left cosines; the corresponding results for the right cosines follow by swapping arguments.

\begin{lemma}[Directional cosine identities]
\label{lem:cosine_properties}
Let $x, y, z \in X$ be nonzero, let $\sigma > 0$, and let
${\alpha, \beta \geq 0}$. Then
\begin{enumerate}[label=\textup{(\alph*)}, noitemsep, leftmargin=*]
  \item\label{cos:self}
        $\cos_L(x,\pm x)= \pm 1$;
  \item\label{cos:scale}  ${\cos_L(\sigma x,y)=\cos_L(x,\sigma  y)=\cos_L(x,y)};$   
  \item\label{cos:convex} for ${w = \alpha y + \beta z}$, with ${w \neq 0}$, there holds
$$
\cos_L(x, w)
\le
\alpha\tfrac{\norm{y}}{\norm{w}}\cos_L(x,y)
+\beta\tfrac{\norm{z}}{\norm{w}}\cos_L(x,z);
$$
\item\label{cos:defect} 
if $\wpair{\cdot}{\cdot}$ is a \gls{SIP} and ${x, y, z \in S_X}$, then
        $$\resizebox{\linewidth}{!}{$
          \bigl|\cos_L(x,z) - \cos_L(y,z)\,\cos_L(x,y)\bigr|
          \leq \norm{z - \cos_L(y,z)\,y}.
$
}$$
\end{enumerate}
\end{lemma}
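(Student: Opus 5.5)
The plan is to unwind each item directly from the definition \eqref{eq:cosines} together with the defining axioms of a compatible weak pairing. These are compatibility $\wpair{x}{x}=\norm{x}^2$, weak homogeneity ($\wpair{\alpha x}{y}=\wpair{x}{\alpha y}=\alpha\wpair{x}{y}$ for $\alpha\ge 0$), subadditivity in the first argument, and the Cauchy--Schwarz inequality $|\wpair{x}{y}|\le\norm{x}\norm{y}$. We also use the regularity properties \ref{rwp:straight} and \ref{rwp:linear}. No earlier result beyond Standing Assumption~\ref{sa:pairing} is needed. Each item is a short computation, so the proof proceeds item by item.

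For \ref{cos:self}, write $\cos_L(x,\pm x)=\wpair{\pm x}{x}/\norm{x}^2$. The ``$+$'' case is compatibility, and the ``$-$'' case is the straight angle property \eqref{eq:straight_angle}. For \ref{cos:scale}, apply weak homogeneity in the second argument to get $\wpair{y}{\sigma x}=\sigma\wpair{y}{x}$, and in the first argument to get $\wpair{\sigma y}{x}=\sigma\wpair{y}{x}$. Combined with $\norm{\sigma x}=\sigma\norm{x}$, the factor $\sigma$ cancels from numerator and denominator. For \ref{cos:convex}, start from $\norm{x}\norm{w}\cos_L(x,w)=\wpair{\alpha y+\beta z}{x}$. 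Subadditivity in the first argument bounds this by $\wpair{\alpha y}{x}+\wpair{\beta z}{x}$, and weak homogeneity turns it into $\alpha\wpair{y}{x}+\beta\wpair{z}{x}$. Note that if $\alpha=0$ the term $\wpair{0}{x}=0$ by homogeneity, so the degenerate cases are covered. Then write $\wpair{y}{x}=\norm{x}\norm{y}\cos_L(x,y)$, and similarly for $z$, and divide by $\norm{x}\norm{w}>0$.

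For \ref{cos:defect}, since $x,y,z\in S_X$ we have $\cos_L(x,z)=\wpair{z}{x}$, $\cos_L(y,z)=\wpair{z}{y}$, and $\cos_L(x,y)=\wpair{y}{x}$. Set $c=\wpair{z}{y}\in\R$. Because a \gls{SIP} is linear in its first argument, including for negative scalars, the left-hand side equals $|\wpair{z-cy}{x}|$. Partial linearity \eqref{eq:partial_linearity} with $a=-c$ also gives this without full linearity, since the pairing is evaluated at second argument $x$ and full linearity is what is actually used here. The Cauchy--Schwarz inequality then gives $|\wpair{z-cy}{x}|\le\norm{z-cy}\norm{x}=\norm{z-cy}$, which is the claim.

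The only point requiring care is \ref{cos:defect}. The identity $\wpair{z}{x}-c\wpair{y}{x}=\wpair{z-cy}{x}$ needs additivity (not just subadditivity) and homogeneity for possibly negative $c$. This is exactly why the \gls{SIP} hypothesis is imposed, and I would state explicitly where linearity is invoked. Everything else is routine bookkeeping of the weak-pairing axioms.
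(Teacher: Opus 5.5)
Your proof is correct and follows the paper's argument item by item: compatibility and the straight angle property for (a), weak homogeneity for (b), subadditivity with weak homogeneity for (c), and, for (d), linearity of the SIP in its first argument followed by Cauchy--Schwarz, where your $z-cy$ is exactly the paper's $q$. The one thing to fix is the aside on partial linearity: \eqref{eq:partial_linearity} with $a=-c$ only gives $\wpair{z-cy}{y}=0$, which is the paper's orthogonality remark and is not needed for the bound. It does not give the identity with second argument $x$, so delete that sentence and rely on full linearity, as your last paragraph correctly says.
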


\begin{example}[Directional cosines in $\R^2$]
\label{ex:cosines_R2}
Let $x=(1,0.5)$ and $y=(0.3,1)$,
as in Example~\ref{ex:sip_R2}. Then $\sign(x)=(1,1) $ and $\sign(y)=(1,1)$. Moreover, $m_x=1$ and $m_y=2$.  Then
$$
\begin{array}{rrcl}
\ell^1: \ &
\cos_L(x,y) = \!&\!1\!&\! = \cos_R(x,y),\\
\ell^2: \ &
\cos_L(x,y)= \!&\!\tfrac{16}{\sqrt{545}}\!&\! = \cos_R(x,y),\\
\ell^\infty: \ &
\cos_L(x,y)=\tfrac{3}{10} \!\!&\!\!\neq\!\!&\!\! \tfrac{1}{2} = \cos_R(x,y).
\end{array}
$$
In $\ell^1$, both cosines equal one since $x$ and $y$ share the
same sign pattern; in $\ell^2$, they coincide by symmetry of the
inner product. In $\ell^\infty$, they differ because $x$ and $y$
attain their maxima at different indices. The max and min-index
pairings agree here, as $x$ and $y$ have a single peak index.
\end{example}

\subsection{Directional cosines and polyhedral norms}

In inner product spaces, angles correspond to \emph{continuous} rotations.
In normed spaces with polyhedral norms, directional angles instead encode \emph{combinatorial} transitions between facets of the unit ball, whose structure depends on the regular pairing.  We illustrate this contrast in $\R^n$ by comparing the rotational geometry of $\ell^2$ with the dual polyhedral geometries of
$\ell^1$ and $\ell^\infty$.

\subsubsection{Directional cosines in \texorpdfstring{$\ell^1$}{l1}}
The unit ball of $\ell^1(\R^n)$ is the cross-polytope whose $2^n$ open facets
are indexed by sign vectors $\sigma\in\{-1,+1\}^n$, defined as
\begin{equation}
  G_\sigma = \bigl\{x\in\R^n \,\mid\, \norm{x}_1=1,\;\sign(x)=\sigma\bigr\}.
\end{equation}
The outward normal to $G_\sigma$ is $\sigma$ itself. 
For nonzero ${x,y\in \R^n}$, the left cosine 
${\cos_L(x,y) = \sign(x)^\top y /\norm{y}_1}$
induced by the sign pairing is the Euclidean inner product between the outward normal $\sign(x)$ and
the normalized vector $y/\norm{y}_1$.
Introducing the sign-concordant and sign-discordant index sets
$I_\pm(x,y) = \bigl\{i\in\{1,\ldots,n\} \,\mid\, \pm \, x_iy_i > 0\bigr\},$
the left cosine can be written as
\begin{equation}
  \label{eq:cosL_l1_mass}
  \cos_L(x,y)
  = \sum_{i\in I_+(x,y)}\tfrac{|y_i|~}{\norm{y}_1} -\sum_{i\in I_-(x,y)}\tfrac{|y_i|~}{\norm{y}_1} .
\end{equation}
which reflects how the $\ell^1$ ``mass'' of $y$ splits between sign-concordant and sign-discordant indices relative to $x$. From this viewpoint, the map 
${\mathsf{S}\colon\sign(x)\mapsto\sign(y)}$, which we call the 
\emph{sign-pattern transition}, plays the role in $\ell^1$ that a 
rotation plays in $\ell^2$. The map $\mathsf{S}$ identifies 
\emph{which} facet transition occurs, while the left cosine 
$\cos_L(x,y)$ quantifies \emph{how much} of the $\ell^1$ mass 
of~$y$ is redistributed between sign-concordant and sign-discordant 
indices; see Figure~\ref{fig:phase_transitions} (left).
 
\subsubsection{Directional cosines in \texorpdfstring{$\ell^\infty$}{l-infinity}}
The unit ball of $\ell^\infty(\R^n)$ is the hypercube $[-1,1]^n$, with $2n$
closed facets defined as
\begin{equation}
  F_k^\pm = \bigl\{x\in[-1,1]^n \,\mid\, \pm x_k = 1\bigr\},
  \ \,  k\in\{1,\ldots,n\}.
\end{equation}
The outward unit normal to $F_k^\pm$ is $\pm e_k$. 
A facet $F_k^\pm$ is \emph{active} at ${x\in\R^n}$ if ${k\in\Iinfty{x}}$.
The choice of regular pairing directly affects the geometry of directional cosines, as we now illustrate for the max and min-index pairings.

\paragraph{Min-index pairing} The min-index pairing induces the
left cosine ${\cos_L(x,y)=\sign(x_{m_x})\,e_{m_x}^{\top} y/\norm{y}_\infty}$, \textit{i.e.}, the signed, $\norm{y}_\infty$-normalized value of~$y$ at
$x$'s minimal peak index. All other components of~$y$ are ``invisible''
to the cosine. The \emph{facet label map} ${\lab_{m}\colon x\mapsto(m_x,\,\sign(x_{m_x}))}$ assigns to each nonzero ${x\in\R^n}$ the active facet of the unit hypercube at~$x$. The \emph{facet transition} ${\mathsf{F}\colon\lab_{m}(x)\mapsto\lab_{m}(y)}$ then captures a discrete ``jump'' among the $2n$ facets. The map
$\mathsf{F}$ is the $\ell^\infty$ analogue of a rotation in $\ell^2$: it identifies \emph{which} facet jump occurs, while $\cos_L(x,y)$ quantifies \emph{how much}~$y$ aligns with~$x$ at $x$'s peak index; see Figure~\ref{fig:phase_transitions} (right).

\paragraph{Max pairing} The max pairing induces the left cosine
${\cos_L(x,y)=\max_{i\in\Iinfty{x}}\sign(x_i) y_i/\norm{y}_\infty}$,
\textit{i.e.}, the largest signed, $\norm{y}_\infty$-normalized
alignment between $x$ and $y$ over all peak indices of~$x$. The max and min-index
pairings coincide when $|\Iinfty{x}|=1$; when multiple peak indices
exist, the max pairing selects the most favorable, yielding the
smallest left angle among \textit{all} regular pairings
for~$\ell^\infty$. The \emph{facet label map}
${\lab_\infty\colon x\mapsto(\Iinfty{x},\,\sign(x)|_{\Iinfty{x}})}$
assigns to each nonzero ${x\in\R^n}$ the \emph{set} of active
facets of the unit hypercube at~$x$. The \emph{facet transition}
${\mathsf{F}\colon\lab_\infty(x)\mapsto\lab_\infty(y)}$ then
captures a set-valued ``jump'' among facets, rather than the single
jump tracked by the min-index pairing.

\begin{remark}[Duality] \label{rem:duality}
The Banach space duality between $\ell^1$ and
$\ell^\infty$ in $\R^n$ manifests itself geometrically in different ways.
In $\ell^1$, the cosine depends on \emph{all} components of~$y$
(a \textit{global} quantity), while in $\ell^\infty$ it depends on a
\emph{single} component (a \textit{local} quantity). Duality also
appears in worst-case inputs: in $\ell^1$ inputs are maximally sparse (unit
impulses), while in $\ell^\infty$ inputs are maximally spread
(``bang-bang'' signals). For linear operators, duality yields the logarithmic norm
identity ${\mu_1(A) = \mu_\infty(A^\top)}$~\cite{bullo2024}.
\end{remark}

\subsection{Directional phase of an operator}

Directional angles naturally give rise to corresponding notions of phase for an operator.
The \emph{left phase} of an operator~$T$ at ${(x,y)\in\graph(T)}$, with ${x,y\neq 0}$,
is defined by $\angle_L(x,y)$; similarly, the \emph{incremental left phase} 
of~$T$ at ${(x,y)\in\igraph(T)}$, with ${x,y\neq 0}$, is likewise given by $\angle_L(x,y)$. Right phases are defined analogously by reversing the arguments. These notions lead to a gain--phase decomposition of the logarithmic norm.

\begin{proposition}[Gain--phase decomposition]
\label{prop:lognorm_phase}
Let $A$ be a bounded linear operator on $X$. Then
\begin{equation}
\label{eq:lognorm_phase}
  \mu(A) = \sup_{x\in S_X} \|Ax\|\cos_L(x,Ax).
\end{equation}
In particular, $\mu(A)\leq 0$ if and only if $\angle_L(x,Ax)\geq\pi/2$ for all 
$x\in S_X$ with $Ax\neq 0$.
\end{proposition}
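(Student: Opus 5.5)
The plan is to start from Lumer's identity \eqref{eq:lumer}, which holds for every regular pairing by \ref{rwp:lumer}, and rewrite each term $\wpair{Ax}{x}$ in gain--phase form using Definition~\ref{def:left_cosines_and_angles}. For $x\in S_X$ with $Ax\neq 0$, the definition of the left cosine gives
\[
\cos_L(x,Ax)=\frac{\wpair{Ax}{x}}{\norm{x}\norm{Ax}}=\frac{\wpair{Ax}{x}}{\norm{Ax}},
\]
so $\wpair{Ax}{x}=\norm{Ax}\cos_L(x,Ax)$.

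The only point that needs care is $Ax=0$, where $\cos_L(x,Ax)$ is undefined. I would adopt the convention that $\norm{Ax}\cos_L(x,Ax)=0$ there, since $\cos_L\in[-1,1]$ is bounded and the factor $\norm{Ax}$ vanishes. I would then check that $\wpair{0}{x}=0$. This follows from partial linearity \eqref{eq:partial_linearity} with $a=0$ applied twice: $\wpair{0+0\cdot x}{x}=\wpair{0}{x}$ is trivial, so instead take the first argument $x$ and $a=-1$, which gives $\wpair{x-x}{x}=\wpair{x}{x}-\norm{x}^2=0$ by compatibility. Hence the identity $\wpair{Ax}{x}=\norm{Ax}\cos_L(x,Ax)$ holds for every $x\in S_X$, and taking the supremum over $S_X$ yields \eqref{eq:lognorm_phase}.

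For the ``in particular'' statement, I would argue both directions.

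\emph{Sufficiency.} Suppose $\angle_L(x,Ax)\ge\pi/2$ whenever $Ax\neq0$. Since $\arccos$ is decreasing, $\cos_L(x,Ax)\le 0$ for those $x$. Every term in the supremum is therefore $\le0$, either from this bound or because it is zero by the convention, so $\mu(A)\le0$.

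\emph{Necessity.} Suppose $\mu(A)\le 0$. Each term is then bounded by the supremum, so $\norm{Ax}\cos_L(x,Ax)\le0$. When $Ax\ne0$, dividing by $\norm{Ax}>0$ gives $\cos_L(x,Ax)\le 0$, which is equivalent to $\angle_L(x,Ax)\ge\pi/2$ by monotonicity of $\arccos$ on $[-1,1]$.

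I do not expect a real obstacle. The only subtlety is the degenerate case $Ax=0$, that is, the convention together with $\wpair{0}{x}=0$, and the partial-linearity argument above settles it.
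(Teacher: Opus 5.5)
Your proof is correct and is the paper's argument: you write $\wpair{Ax}{x}=\|Ax\|\cos_L(x,Ax)$ on $S_X$, take the supremum using Lumer's identity, and read off the sign condition from $\|Ax\|>0$. Your explicit treatment of the case $Ax=0$ (the convention plus $\wpair{0}{x}=0$ via partial linearity) fills a detail the paper passes over silently; before writing it up, delete the false start about applying partial linearity ``with $a=0$ twice.''
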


In classical control theory, passivity is equivalent to a nonnegative
phase constraint on the transfer function~\cite{desoer1975feedback}. In normed spaces, directional phase plays an analogous role, characterizing key operator-theoretic properties, such as monotonicity, the incremental counterpart of passivity~\cite{desoer1975feedback}.

We call an operator $T$ \emph{monotone} if
\begin{equation} \label{eq:monotonicity}
\quad {\wpair{y}{x}\geq 0} \quad \forall \, (x,y)\in\igraph(T).    
\end{equation}
Our notion of monotonicity is weaker than the one in~\cite{davydov2024monotone}, which 
requires ${-\wpair{-y}{x}\geq 0}$. By~\cite[Theorem 3.5, (iii)]{proskurnikov2024regular}, the latter implies the former, but not conversely. To see this, take 
${x=(1,-1)}$ and ${y=(1,1)}$ in $\ell^\infty(\R^2)$ with the max 
pairing. Then ${\wpair{y}{x}_\infty=1\geq 0}$, yet 
${-\wpair{-y}{x}_\infty=-1<0}$. 

Monotonicity admits a simple characterization in terms of directional
phase.

\begin{lemma}[Phase characterization of monotonicity]
\label{lem:phase_monotone}
An operator $T$ is monotone if and only if 
${\angle_L(x,y)\leq\pi/2}$ for all ${(x,y)\in\igraph(T)}$ with 
${x,y\neq 0}$.
\end{lemma}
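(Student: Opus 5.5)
The proof is a direct unwinding of the definitions. It splits according to whether $x$ or $y$ vanishes, and uses the monotonicity of $\arccos$ on $[-1,1]$ for the remaining pairs.

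\emph{Pairs with a zero component.} First I would check that pairs $(x,y)\in\igraph(T)$ with $x=0$ or $y=0$ satisfy $\wpair{y}{x}\ge 0$ automatically, so that monotonicity only constrains pairs with $x,y\neq 0$.
\begin{itemize}[label=--]
\item If $x=0$, the Cauchy--Schwarz inequality for weak pairings gives $|\wpair{y}{0}|\le\|y\|\,\|0\|=0$, so $\wpair{y}{x}=0$.
\item If $y=0$, partial linearity~\eqref{eq:partial_linearity} with $a=0$ gives nothing directly. Instead I would apply \eqref{eq:partial_linearity} to $\wpair{0+0\cdot x}{x}$. Alternatively, weak homogeneity yields $\wpair{0}{x}=\wpair{0\cdot x}{x}=0$. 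Most simply, \eqref{eq:partial_linearity} with first argument $0$ and $a=0$ is consistent, and Cauchy--Schwarz again yields $|\wpair{0}{x}|\le 0$.
\end{itemize}
Hence in either case $\wpair{y}{x}=0\ge 0$.

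\emph{Pairs with $x,y\neq 0$.} By Definition~\ref{def:left_cosines_and_angles}, $\cos_L(x,y)=\wpair{y}{x}/(\|x\|\|y\|)$, and the denominator is strictly positive. Therefore $\wpair{y}{x}\ge 0$ if and only if $\cos_L(x,y)\ge 0$. Since $\arccos$ is strictly decreasing on $[-1,1]$ with $\arccos(0)=\pi/2$, and $\cos_L(x,y)\in[-1,1]$ by Cauchy--Schwarz, this is in turn equivalent to $\angle_L(x,y)\le\pi/2$.

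\emph{Combining the two cases.} Monotonicity~\eqref{eq:monotonicity} is equivalent to the angle condition on pairs with $x,y\neq 0$, because the remaining pairs impose no constraint.

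The only mildly delicate step is the degenerate case: one has to confirm that $\wpair{\cdot}{\cdot}$ vanishes when either argument is $0$, and this follows from the Cauchy--Schwarz inequality.
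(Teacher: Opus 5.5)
Your proof is correct and follows the same route as the paper: for $x,y\neq 0$ you rewrite $\wpair{y}{x}=\|x\|\|y\|\cos_L(x,y)$ and use that $\arccos$ is decreasing on $[-1,1]$ with $\arccos(0)=\pi/2$. Your extra check that pairs with $x=0$ or $y=0$ give $\wpair{y}{x}=0$, via the compatible Cauchy--Schwarz bound $|\wpair{u}{v}|\le\|u\|\|v\|$, fills in a step the paper's proof leaves implicit. The detours through partial linearity in the $y=0$ bullet are unnecessary and can be dropped.
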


\noindent
Lemma~\ref{lem:phase_monotone} is the incremental counterpart of
the classical phase characterization of
passivity~\cite{desoer1975feedback}. For a bounded linear
operator~$A$, combining it with the gain--phase
decomposition~\eqref{eq:lognorm_phase} shows that $A$ is monotone
if and only if $\mu(-A) \leq 0$. The transpose duality
$\mu_1(A) = \mu_\infty(A^\top)$ then recovers that
$A$ is $\ell^1$-monotone if and only if $A^\top$ is
$\ell^\infty$-monotone~\cite{bullo2024}, as illustrated numerically in
Section~\ref{sec:numerical}. Next, we strengthen
this connection by combining incremental phase and gain information into a
single object: the \gls{SRG} of an operator.

\section{Scaled Relative Graphs}
\label{sec:srg}

In a Hilbert space, the \gls{SRG} is defined as in~\eqref{eq:srg_hilbert}
using the inner product to measure angles and
norms~\cite{ryu2022scaled}. In a normed space, the norm need not
arise from an inner product, but a compatible regular pairing
provides the necessary structure. Since the pairing need not be
symmetric, it induces two directional angles, and hence two
\glspl{SRG}; we define only the left variant, the right counterpart being
analogously defined.

\begin{definition}[Left SRG]
\label{def:srg_banach}
The \emph{left SRG} of an operator~$T$ is\footnote{
By convention, the pair $(0,0)\in\igraph(T)$ does not contribute a point to $\SRGL(T)$, as $0/0$ is excluded by the arithmetic rules on~$\bar\C$~\cite{ryu2022scaled}.}
\begin{equation}
\label{eq:srgl}
  \SRGL(T) = \left\{
    \tfrac{\|y\|}{\|x\|}e^{\pm i\angle_L(x,y)}
    \, \middle| \,
    (x,y)\in\igraph(T)
  \right\}  .
\end{equation}
\end{definition}

\noindent
The left \gls{SRG} reflects a range of properties of an operator. The modulus and argument of each point $z \in \SRG_L(T)$ coincide with the incremental gain and left phase of some input-output pair, respectively. By definition, conjugate symmetry about the real axis holds.  
In a Hilbert space, ${\SRG_L(T)=\SRG_R(T)}$ since directional angles coincide, so we simply write $\SRG(T)$. Otherwise, $\SRG_L(T) $ and $\SRG_R(T)$ are referred to as \emph{directional \glspl{SRG}}. Crucially, approximating directional \glspl{SRG} does not require an explicit representation of the operator~$T$, only a sufficient number of 
incremental input--output samples. For a bounded \textit{linear} operator $A$,
Proposition~\ref{prop:lognorm_phase} gives
$$\lognorm(A) = \sup_{z \in \SRG_L(A)} \Real(z).$$

\subsection{Operator properties from directional \glsentrytext{SRG}s}
\label{ssec:srg_props}

Directional \glspl{SRG} characterize geometrically key operator
properties~\cite{DavydovJafarpourBullo2022,davydov2024monotone,proskurnikov2024regular}.
An operator $T$ is 
\begin{itemize}[leftmargin=*, label=\raisebox{0.3ex}{\footnotesize$\bullet$}]
  \item \emph{$\ell$-Lipschitz}, with ${\ell>0}$, if
        ${\norm{y}\le\ell\norm{x}}$ for all ${(x,y)\in\igraph(T)}$;
        in particular, $T$ is \emph{nonexpansive} if ${\ell\le 1}$ and
        \emph{contractive} if ${\ell<1}$.

  \item \emph{one-sided $c$-Lipschitz}, with  ${c\in\R}$, if
        ${\wpair{y}{x}\le c\,\norm{x}^2}$ for all ${(x,y)\in\igraph(T)}$;
        in particular, $T$ is \emph{dissipative} if ${c=0}$.

  \item \emph{$\mu$-strongly monotone}, with ${\mu\ge 0}$, if
        ${\wpair{y}{x}\ge\mu\norm{x}^2}$ for all $(x,y)\in\igraph(T)$; in particular, $T$ is \emph{monotone} if ${\mu=0}$;

  \item \emph{$\gamma$-cocoercive}, with ${\gamma>0}$, if
        ${\wpair{y}{x}\ge\gamma\norm{y}^2}$ for all ${(x,y)\in\igraph(T)}$.
\end{itemize}
\noindent
In a Hilbert space, these properties recover the standard notions
of Lipschitz continuity, monotonicity, strong monotonicity, and
cocoercivity~\cite{bauschke2017}, and each has a natural
system-theoretic interpretation~\cite{chaffey2023graphical}. When the regular pairing is induced by the duality map of a Banach space,  one-sided $0$-Lipschitz and $\mu$-strong monotonicity specialize to 
accretivity and strong accretivity~\cite{Chidume2009}, respectively.

\begin{theorem}[Operator properties from directional \glspl{SRG}]
\label{thm:srg_props}
Let $T$ be an operator on $X$. Then
\begin{enumerate}[label=\textup{(\alph*)}, noitemsep, leftmargin=*]
  \item\label{T:Lipschitz} 
    $T$ is $\ell$-Lipschitz if and only if  
    $${\SRGL(T)\subseteq\left\{z \in \bar{\mathbb{C}} \, \middle |\, |z|\leq\ell\right\}};$$
  \item\label{T:one-sided-Lipschitz} 
  $T$ is one-sided $c$-Lipschitz if and only if 
    $$\SRGL(T)\subseteq \left\{z \in \bar{\mathbb{C}} \, \middle |\, \Real z\leq c\right\};$$
  \item \label{T:strongly-monotone}  
  $T$ is $\mu$-strongly monotone if and only if 
    $$\SRGL(T)\subseteq \left\{z \in \bar{\mathbb{C}} \, \middle |\, \Real z\geq\mu\right\};$$
  \item \label{T:cocoercive}  
  $T$ is $\gamma$-cocoercive if and only if 
    $$\SRGL(T)\subseteq\left\{z \in \bar{\mathbb{C}} \ \middle | \ \left|z-\tfrac{1}{2\gamma}\right|\leq\tfrac{1}{2\gamma}\right\}.$$
\end{enumerate}
\end{theorem}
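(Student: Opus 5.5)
The plan is to reduce every item to a pointwise statement about a single increment $(x,y)\in\igraph(T)$ and the point it contributes to $\SRGL(T)$. Fix such a pair with $x\neq 0$ and let $z=\tfrac{\|y\|}{\|x\|}e^{\pm i\angle_L(x,y)}$. Two identities then do most of the work:
\[
|z|=\frac{\|y\|}{\|x\|},
\]
and, when $y\neq 0$, Definition~\ref{def:left_cosines_and_angles} gives
\[
\Real z=\frac{\|y\|}{\|x\|}\cos_L(x,y)=\frac{\wpair{y}{x}}{\|x\|^2}.
\]
If $y=0$, then $z=0$. Positive definiteness together with weak homogeneity (or Cauchy--Schwarz, $|\wpair{0}{x}|\le 0$) gives $\wpair{0}{x}=0$, so the second identity still holds. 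If $x=0$ and $y\neq0$, the point is $z=\infty$. The pair $(0,0)$ contributes nothing, and every property is trivially satisfied by it.

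With these identities, each item is a rewriting of the defining inequality divided by $\|x\|^2>0$.
\begin{itemize}
\item For (a), $\|y\|\le\ell\|x\|$ is equivalent to $|z|\le\ell$.
\item For (b), $\wpair{y}{x}\le c\|x\|^2$ is equivalent to $\Real z\le c$.
\item For (c), $\wpair{y}{x}\ge\mu\|x\|^2$ is equivalent to $\Real z\ge\mu$.
\item For (d), use $|z|^2=\|y\|^2/\|x\|^2$. Then $\wpair{y}{x}\ge\gamma\|y\|^2$ is equivalent to $\Real z\ge\gamma|z|^2$. Writing $z=a+ib$, the inequality $a\ge\gamma(a^2+b^2)$ is exactly $|z-\tfrac1{2\gamma}|^2\le\tfrac1{4\gamma^2}$.
\end{itemize}
Both signs $\pm$ give the same modulus and real part, so each equivalence covers both conjugate points. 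Quantifying over all increments yields ``for all pairs'' if and only if ``$\SRGL(T)$ is contained in the region'', since every point of $\SRGL(T)$ arises from some pair.

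The main obstacle is the degenerate pairs with $x=0$, $y\neq0$, which produce the point $\infty\in\bar\C$. I will treat this case using the $\bar\C$ conventions of~\cite{ryu2022scaled}.
\begin{itemize}
\item In (a), such a pair violates $\|y\|\le\ell\|x\|$, and $\infty$ lies outside the disk, so the equivalence holds.
\item In (d), such a pair violates $0=\wpair{y}{0}\ge\gamma\|y\|^2$, and $\infty$ lies outside the disk, so the equivalence holds.
\item In (b) and (c), the inequality $0\le c\cdot 0$ (respectively $0\ge0$) holds trivially, so such a pair imposes no constraint. The containment must therefore be read with the half-planes understood to contain, or not be tested at, $\infty$. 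I will adopt the convention of~\cite{ryu2022scaled,chaffey2023graphical} that unbounded half-planes in $\bar\C$ include $\infty$, which makes (b) and (c) hold verbatim.
\end{itemize}
I will state this convention explicitly and check it against the footnote's handling of $0/0$. The remainder is routine algebra.
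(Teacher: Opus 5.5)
Your proposal is correct and follows the paper's proof: both reduce every item to the pointwise identities $|z|=\|y\|/\|x\|$ and $\Real z=\wpair{y}{x}/\|x\|^2$ for increments with $x\neq 0$, then divide the defining inequality by $\|x\|^2$, with a completed square for (d). The paper's proof only considers $x\neq 0$; your treatment of the degenerate increments is more careful. This covers the case $y=0$ and, above all, the point $\infty$ produced by $x=0$, $y\neq 0$. For that point, (b) and (c) genuinely need the convention that the half-planes contain $\infty$: a multi-valued operator with $(0,y)\in\igraph(T)$ satisfies both inequalities, yet its SRG contains $\infty$.
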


\noindent
Theorem~\ref{thm:srg_props} extends to normed spaces the graphical
framework introduced in~\cite{ryu2022scaled} and further developed
in~\cite{chaffey2023graphical}. It provides geometric containment
tests for verifying properties of~$T$ directly from $\SRGL(T)$. For
example, $T$ is monotone if and only if $\SRGL(T)$ lies in the
closed right half-plane. Another immediate corollary is a graphical contractivity test: the iteration $x^{k+1}=T(x^k)$ converges to a unique fixed point if $\SRG_L(T)$ lies strictly inside the open unit disk. In this case,  $T$ is $\ell$-contractive, with $\ell\in(0,1)$ given by
\begin{equation} \label{eq:contraction_factor}
\ell = \displaystyle  \sup_{z\in\SRG_L(T)} |z| .    
\end{equation}
Existence, uniqueness, and geometric convergence with rate~$\ell$ follow from the Banach contraction principle~\cite{Zeidler1986}.

Motivated by the problem of inferring the directional \gls{SRG} 
of a composite operator from those of its components, we extend the \gls{SRG} calculus
of~\cite{ryu2022scaled,chaffey2023graphical}. Throughout this section, we focus on real normed spaces equipped with \glspl{SIP}, as the algebraic rules rely on linearity in the first argument.

For ${S\subseteq\bar\C}$ and ${\alpha\in\R}$, we write
${\alpha S=\{\alpha z \mid z\in S\}}$ and ${S^{-1}=\{1/\bar z \mid z\in S\}}$, where ${1/0=\infty}$ and ${1/\infty=0}$.
For $S_1, S_2 \subseteq \bar{\C}$, we define
\begin{equation}\nonumber
  S_1 \boxplus S_2
  \!=\! \left\{ z \,\middle|\, \exists\, z_j \in S_j,\!\!
    \begin{array}{l}
      \Real z = \Real z_1 + \Real z_2, \\
      \bigl||z_1|-|z_2|\bigr| \leq |z| \leq |z_1|+|z_2|
    \end{array} \!\!\!
  \right\}\!.
\end{equation}
Given an operator $A$ and sets $S_1, S_2 \subseteq \bar{\C}$, we also
define
\begin{equation}\nonumber
  S_1 \diamond S_2
  \!=\! \left\{ z \,\middle|\, \exists\, z_j \in S_j,\!\!
    \begin{array}{l}
      |z| = |z_1|\,|z_2|, \\
      |\Real z - \Real z_1\,\Real z_2| \leq \sigma_A\,|z|
    \end{array} \!\!\!
  \right\}\!,
\end{equation}
where ${\sigma_A\geq 0}$ is the supremum of
$\left\|\tfrac{y}{\|y\|} - \cos_L(x,y)\,\tfrac{x}{\|x\|}\right\|$ over all
nonzero $(x,y)\in\igraph(A)$.

\begin{theorem}[\gls{SRG} calculus]
\label{thm:srg_ops}
Let $A,B$ be operators on a real normed space $(X,\|\cdot\|)$ with a
compatible \gls{SIP} $\wpair{\cdot}{\cdot}$. Then
\begin{enumerate}[label=\emph{(\alph*)},noitemsep,leftmargin=*]
  \item\label{srg:scaling}
    \emph{Scaling:} ${\SRGL(\alpha A)=\alpha\,\SRGL(A)}$ for every
    ${\alpha\in\R}$;
  \item\label{srg:inversion}
    \emph{Inversion:}
    ${\SRGL(A^{-1})=\bigl(\SRGR(A)\bigr)^{-1}}$;
    \item\label{srg:addition}
    \emph{Addition:}
    ${\SRGL(A+B)\subseteq\SRGL(A)\boxplus\SRGL(B)}$;
    \item\label{srg:composition}
    \emph{Composition:} ${\SRGL(AB)\subseteq
      \SRGL(A)\diamond\SRGL(B)}$. 
\end{enumerate}
\end{theorem}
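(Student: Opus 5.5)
Each part is proved pointwise: take a point $z\in\SRGL(\cdot)$ of the composite operator, realized by some $(x,y)$ in its incremental graph, and exhibit points of the component \glspl{SRG} with the required relations. Throughout, the \gls{SIP} gives linearity of $\wpair{\cdot}{\cdot}$ in the first argument. Two facts make the $\pm$ in~\eqref{eq:srgl} harmless. First, a point $z=\tfrac{\|y\|}{\|x\|}e^{\pm i\angle_L(x,y)}$ satisfies $|z|=\|y\|/\|x\|$ and $\Real z=\|y\|\cos_L(x,y)/\|x\|=\wpair{y}{x}/\|x\|^2$. Second, every \gls{SRG} is conjugate symmetric. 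Hence membership of $z$ depends only on the pair $(|z|,\Real z)$. Degenerate pairs are handled separately in each part: $y=0$ with $x\neq 0$ gives the point $0$, and $x=0$ with $y\neq0$ gives the point $\infty$.

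\emph{Scaling.} Note that $\igraph(\alpha A)=\{(x,\alpha y)\mid (x,y)\in\igraph(A)\}$.
\begin{itemize}
\item For $\alpha>0$, Lemma~\ref{lem:cosine_properties}\ref{cos:scale} leaves the angle unchanged and multiplies the modulus by $\alpha$.
\item For $\alpha<0$, linearity gives $\cos_L(x,\alpha y)=-\cos_L(x,y)$, so the angle becomes $\pi-\angle_L(x,y)$. The resulting point is $|\alpha|\tfrac{\|y\|}{\|x\|}e^{\pm i(\pi-\theta)}=\alpha\tfrac{\|y\|}{\|x\|}e^{\mp i\theta}$, which lies in $\alpha\SRGL(A)$ by conjugate symmetry.
\item The case $\alpha=0$, together with the points $0$ and $\infty$, follows from the $\bar\C$ arithmetic conventions.
\end{itemize}
Since every step is reversible, we get equality rather than inclusion.

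\emph{Inversion.} Here $(x,y)\in\igraph(A)$ if and only if $(y,x)\in\igraph(A^{-1})$. The point of $\SRGL(A^{-1})$ generated by $(y,x)$ is $\tfrac{\|x\|}{\|y\|}e^{\pm i\angle_L(y,x)}=\tfrac{\|x\|}{\|y\|}e^{\pm i\angle_R(x,y)}$. This equals $1/\bar w$ for the point $w=\tfrac{\|y\|}{\|x\|}e^{\pm i\angle_R(x,y)}\in\SRGR(A)$. The correspondence is bijective and maps $0\leftrightarrow\infty$ consistently with $1/0=\infty$, which gives equality.

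\emph{Addition.} A pair in $\igraph(A+B)$ with $x\neq0$ has the form $(x,y_1+y_2)$, where $(x,y_1)\in\igraph(A)$ and $(x,y_2)\in\igraph(B)$ share the same $x$. We expect this to follow from the definition of $A+B$ by choosing $y_j$ as differences of selections at the same two base points. Let $z_j$ be the points generated by $(x,y_j)$; when $y_j=0$ take $z_j=0$.
\begin{itemize}
\item Real part: additivity of the \gls{SIP} gives $\Real z=\wpair{y_1+y_2}{x}/\|x\|^2=\Real z_1+\Real z_2$.
\item Modulus: the triangle inequality and its reverse give $\bigl||z_1|-|z_2|\bigr|\le|z|\le|z_1|+|z_2|$.
\end{itemize}
The case $x=0$ yields $\infty$ and is absorbed by the $\bar\C$ rules.

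\emph{Composition.} A pair in $\igraph(AB)$ factors as $(x,w)\in\igraph(B)$ and $(w,y)\in\igraph(A)$, for suitable intermediate selections. Let $z_1$ come from $(w,y)$ and $z_2$ from $(x,w)$. Then $|z|=\tfrac{\|y\|}{\|x\|}=\tfrac{\|y\|}{\|w\|}\cdot\tfrac{\|w\|}{\|x\|}=|z_1||z_2|$. Moreover, $\Real z_1\Real z_2=|z|\cos_L(w,y)\cos_L(x,w)$. The key step applies Lemma~\ref{lem:cosine_properties}\ref{cos:defect} to the normalized vectors $x/\|x\|$, $w/\|w\|$, $y/\|y\|\in S_X$, which are admissible by Lemma~\ref{lem:cosine_properties}\ref{cos:scale}. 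This gives
$$|\cos_L(x,y)-\cos_L(w,y)\cos_L(x,w)|\le\Bigl\|\tfrac{y}{\|y\|}-\cos_L(w,y)\tfrac{w}{\|w\|}\Bigr\|\le\sigma_A,$$
where the last inequality holds because $(w,y)\in\igraph(A)$. Multiplying by $|z|$ yields $|\Real z-\Real z_1\Real z_2|\le\sigma_A|z|$.

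I expect the main obstacle to be the degenerate cases rather than the inequality. The case $w=0$ with $y\neq0$ is possible for set-valued $A$; it forces $z_1=\infty$, $z_2=0$, and the product $\infty\cdot 0$, which must be checked against the $\bar\C$ conventions of~\cite{ryu2022scaled}. The cases $y=0$ or $x=0$ need similar checks. I would treat these explicitly, using $|z|=0$ when $y=0$ and $z=\infty$ when $x=0$, and otherwise restrict the main argument to nonzero $x,w,y$.
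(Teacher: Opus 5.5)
Your proposal is correct and follows essentially the same route as the paper: points are matched through incremental pairs, and the proof uses linearity of the SIP for scaling and for the real part in addition, the identity $\angle_R(x,y)=\angle_L(y,x)$ for inversion, and Lemma~\ref{lem:cosine_properties}\ref{cos:defect} on normalized vectors together with the definition of $\sigma_A$ for composition. Your version is slightly more careful in two places: you decompose pairs of $\igraph(A+B)$ and $\igraph(AB)$ through a common input and an intermediate vector, which is the direction the inclusions actually need, and the degenerate cases you flag (such as a zero intermediate vector with set-valued $A$) are handled only loosely in the paper as well.
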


\noindent
Theorem~\ref{thm:srg_ops} extends to normed spaces the \gls{SRG} calculus of~\cite{ryu2022scaled,chaffey2023graphical}, enabling properties of composite operators to be inferred from those of their components. For example, one directly recovers submultiplicativity of Lipschitz constants and additivity of one-sided Lipschitz
constants~\cite{davydov2024monotone}. However, two key differences distinguish our result from their Hilbert space counterparts. First, inversion exchanges left and right \glspl{SRG}, indicating that both directional \glspl{SRG} are essential for analysis. Second, composition and addition yield new \gls{SRG}  operations ($\diamond$ and~$\boxplus$), rather than the Minkowski product and sum available in Hilbert spaces~\cite{ryu2022scaled,chaffey2023graphical}.

\section{Numerical Case Studies}
\label{sec:numerical}

We now illustrate how directional \glspl{SRG} differ from
their Hilbert space counterparts, and how these differences can be
exploited in applications such as dynamic programming.
All directional \glspl{SRG} below are constructed from randomly sampled
incremental input--output pairs.

\subsection{Monotonicity of linear operators}
Consider the matrices
\begin{equation}
\label{eq:matrices}
  A_1 = \left[\begin{array}{rrr} 0 & -2 & -2 \\ 0 & 2 & -1 \\ 0 & 0 & 3 \end{array}\right]\! ,
  \ \,
  A_\infty = \left[\begin{array}{rrr} 0 & 0 & \,0 \\ -2 & 2 & \,0 \\ -2 & -1 & \,3 \end{array}\right] \! .
\end{equation}
Note that ${A_\infty = A_1^\top}$. Viewed as linear operators, neither is $\ell^2$-monotone, as their common symmetric part has a negative eigenvalue. By contrast, $\mu_1(-A_1)=0$ and $\mu_\infty(-A_\infty)=0$, so $A_1$ is $\ell^1$-monotone and $A_\infty$ is $\ell^\infty$-monotone. This is not a coincidence: the transpose duality formula
$\mu_1(A) = \mu_\infty(A^\top)$ ensures that
$\ell^1$-monotonicity of $A_1$ is equivalent to
$\ell^\infty$-monotonicity of
$A_1^\top = A_\infty$~(cf.\ Remark~\ref{rem:duality}).

Fig.~\ref{fig:srg_linear} shows the \glspl{SRG} of $A_1$ (top) and $A_\infty$ (bottom) in $\ell^1$ (left), $\ell^2$ (center), and $\ell^\infty$ (right), with the max pairing in $\ell^\infty$. For $A_1$, the $\ell^1$ \gls{SRG} (top-left) lies in the right half-plane, confirming $\ell^1$-monotonicity. By contrast, the $\ell^2$ and $\ell^\infty$ \glspl{SRG} extend into the left half-plane, so monotonicity does not hold in these norms. For $A_\infty$, a dual picture emerges: the $\ell^\infty$ \gls{SRG} (bottom-right) lies in the right half-plane, while the $\ell^1$ and $\ell^2$ \glspl{SRG} extend into the left half-plane. 

\begin{figure}[h!]
  \centering
  \includegraphics[width=\columnwidth]{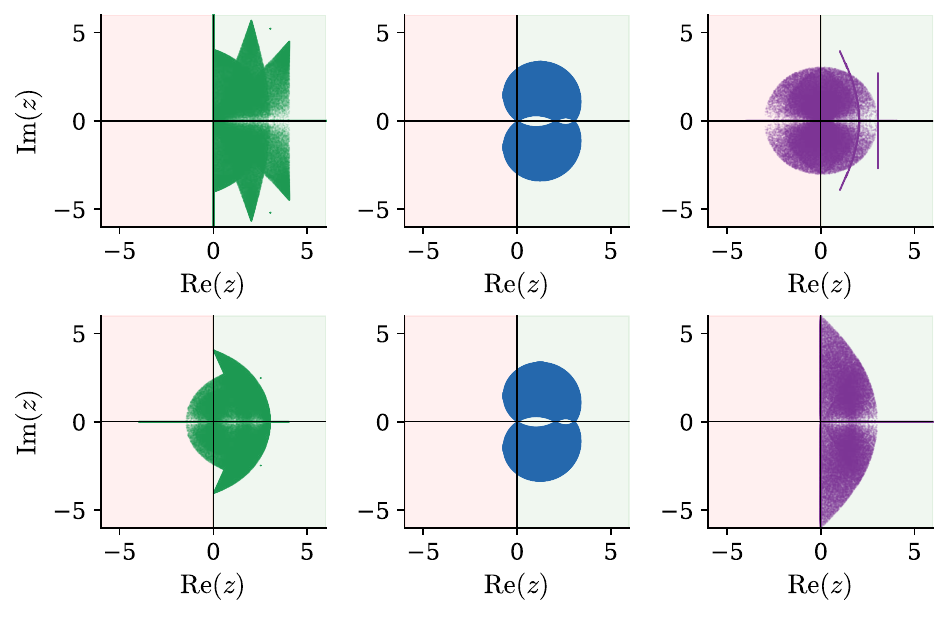}
  \caption{Left \glspl{SRG} of $A_1$ (top) and 
  $A_\infty$ (bottom) in $\ell^1$ (left), $\ell^2$ (center), 
  and $\ell^\infty$ (right; max pairing). The left half-plane marks 
  monotonicity violations: $A_1$ is $\ell^1$-monotone and $A_\infty$ is $\ell^\infty$-monotone. Neither is $\ell^2$-monotone.}
  \label{fig:srg_linear}
\end{figure}

\subsection{Monotonicity of nonlinear operators}
Consider the matrices~\eqref{eq:matrices} and, for ${p \in \{1,\infty\},}$ define
\begin{equation}
\label{eq:nl_map}
    F_p(x) = \diag(A_p)\varphi(x) + (A_p - \diag(A_p))x,
\end{equation}
where $\diag(A)$ is the diagonal matrix with the same diagonal as~$A$ and 
$\varphi$ acts componentwise as
${\varphi_i(x_i) = x_i + x_i^3}$. The map $\varphi$ is
sign-preserving (${\sign(\varphi_i(x_i)) = \sign(x_i)}$) and
expansive ($|\varphi_i(x_i) - \varphi_i(y_i)| \geq |x_i - y_i|$),
so it reinforces the diagonal dominance underlying
$\ell^1$- and $\ell^\infty$-monotonicity of the linear parts. By
contrast, $\ell^2$-monotonicity fails along the negative eigenspace
of the symmetric part.

Fig.~\ref{fig:srg_nonlinear} shows the \glspl{SRG} of 
$F_1$ (top) and $F_\infty$ (bottom). As in the linear case, the $\ell^1$ \gls{SRG} of 
$F_1$ (top-left) and the $\ell^\infty$ \gls{SRG} of 
$F_\infty$ (bottom-right) lie in the right half-plane, while 
all $\ell^2$ \glspl{SRG} extend into the left half-plane. 
Compared with Fig.~\ref{fig:srg_linear}, the \glspl{SRG} occupy a broader region of the extended complex plane, but the monotonicity verdicts are identical.

\begin{figure}[t]
  \centering
  \includegraphics[width=\columnwidth]{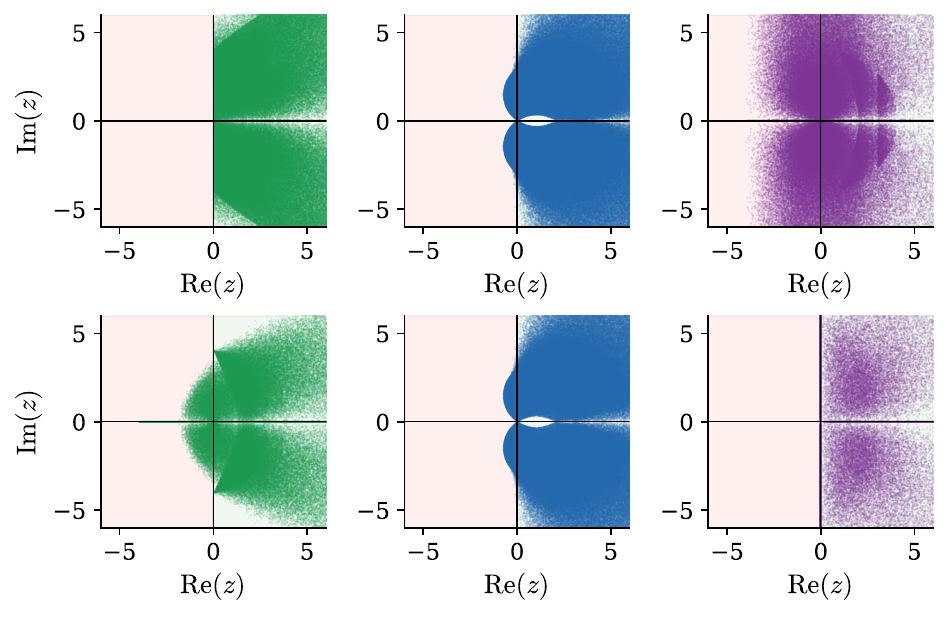}
  \caption{Left \glspl{SRG} of $F_{1}$ (top) and $F_{\infty}$ (bottom) in $\ell^1$ 
  (left), $\ell^2$ (center), and $\ell^\infty$ 
  (right; max pairing). The left half-plane marks 
  monotonicity violations: $F_{1}$ is $\ell^1$-monotone and $F_{\infty}$ is $\ell^\infty$-monotone. Neither is $\ell^2$-monotone.}
  \label{fig:srg_nonlinear}
\end{figure}

\subsection{Contraction certificates for Bellman operators}

Directional \glspl{SRG} in $\ell^\infty$ find a natural use in
dynamic programming~\cite{sutton2018reinforcement,bertsekas2012dynamic}. Consider a \gls{MDP} with finite state space 
${\mathsf{S}=\{1,\ldots,n\}}$, finite action space ${\mathsf{A} =\{1,\ldots,m\}}$,  
reward function ${r\colon \mathsf{S}\times \mathsf{A}\to\R}$, discount 
factor $\gamma\in(0,1)$, and transition probabilities 
$${P(s'\!\mid\! s,a)=\Pr(s_{t+1}\!=\!s'\!\mid\! s_t\!=\!s,a_t\!=\!a) ,\ s,s'\in \mathsf{S}, a\in \mathsf{A}.}$$   
Every stationary policy $\pi\colon \mathsf{S}\to \mathsf{A}$ induces a reward vector $r_\pi\in\R^n$, with $(r_\pi)_s=r(s,\pi(s))$, and a row-stochastic 
transition matrix $P_\pi\in\R^{n\times n}$, with 
${(P_\pi)_{ss'}=P(s'\mid s,\pi(s))}$.

\paragraph{Standard policy evaluation}
The \emph{policy evaluation operator} 
$T_\pi\colon\R^n\to\R^n$ is defined by~\cite[\S 4.1]{sutton2018reinforcement},\cite[\S 1.1]{bertsekas2012dynamic}
\begin{equation}
\label{eq:bellman}
  T_\pi v = r_\pi + \gamma P_\pi v.
\end{equation}
Its unique fixed point ${v_\pi=(I-\gamma P_\pi)^{-1}r_\pi}$ defines the 
value function associated with~$\pi$~\cite{sutton2018reinforcement,bertsekas2012dynamic}. Value iteration 
${v^{k+1}=T_\pi v^k}$ converges geometrically to $v_\pi$ because $T_\pi$ is 
$\gamma$-contractive in $\ell^\infty$~\cite{bertsekas2012dynamic}. 
By Theorem~\ref{thm:srg_props}, this is equivalent to $\SRGL(T_\pi)$
being contained in the disk $\{z \in \bar{\mathbb{C}} \,|\,  |z|\leq\gamma\}$.
Fig.~\ref{fig:srg_bellman} (left) confirms this for a 
randomly generated $8$-state MDP with $\gamma=0.7$. The left 
\gls{SRG} in $\ell^\infty$ (with max pairing) is contained in the disk of radius $\gamma$ centered at the origin.

\paragraph{Regularized policy evaluation}
In approximate dynamic programming, a common strategy to incorporate prior  knowledge is to regularize the Bellman operator~\cite{bertsekas2012dynamic,GeistScherrerPietquin2019}.
Consider the 
\emph{regularized policy evaluation operator}
\begin{equation}
\label{eq:bellman_reg}
  {T}_{\pi,\alpha} v = T_\pi v + \alpha \varphi(v),
\end{equation}
where $\alpha>0$ is regularization parameter and 
$\varphi$ acts componentwise as 
$\varphi_i(v_i)=-v_i/(1+|v_i|)$. 
The map $\varphi$ is bounded (${|\varphi_i(v_i)|\leq 1}$), sign-reversing 
(${\sign(\varphi(v))=-\sign(v)}$), and dissipative: 
it penalizes large values by shrinking them toward zero.  
The map $\varphi$ is also $1$-Lipschitz,  so the triangle inequality gives
\begin{equation}
\label{eq:analytical_bound}
  \|{T}_{\pi,\alpha} v - {T}_{\pi,\alpha} w\|_\infty
  \leq (\gamma + \alpha)\|v-w\|_\infty.
\end{equation}
For $\gamma+\alpha<1$, this certifies contraction, but the bound may 
be conservative, as it ignores the interaction between $T_\pi$ and $\varphi$, effectively treating them as adversarially aligned.

Fig.~\ref{fig:srg_bellman} (right) shows 
$\SRGL({T}_{\pi,\alpha})$ in $\ell^\infty$ for $\alpha=0.25$. The Lipschitz 
bound~\eqref{eq:analytical_bound} gives $\gamma+\alpha=0.95$ (dotted circle), yet $\SRGL({T}_{\pi,\alpha})$ lies in a disk of radius~${\ell \approx 0.89}$ (dashed circle), providing a tighter contraction certificate in line with~\eqref{eq:contraction_factor}. An analytical justification can be obtained by decomposing the left \gls{SRG} of~${T}_{\pi,\alpha}$ into those of $T_\pi$ and $\varphi$ via Theorem~\ref{thm:srg_ops}, although the derivation is involved and therefore omitted. A key advantage of graphical analysis is that such a decomposition is not required when approximate certificates suffice. Directional \glspl{SRG} can be constructed directly from sampled input--output pairs, even for operators accessible only through black-box evaluations or simulation, as is often the case in practice with Bellman operators~\cite{sutton2018reinforcement}.

\begin{figure}[t]
  \centering
  \includegraphics[width=\columnwidth]{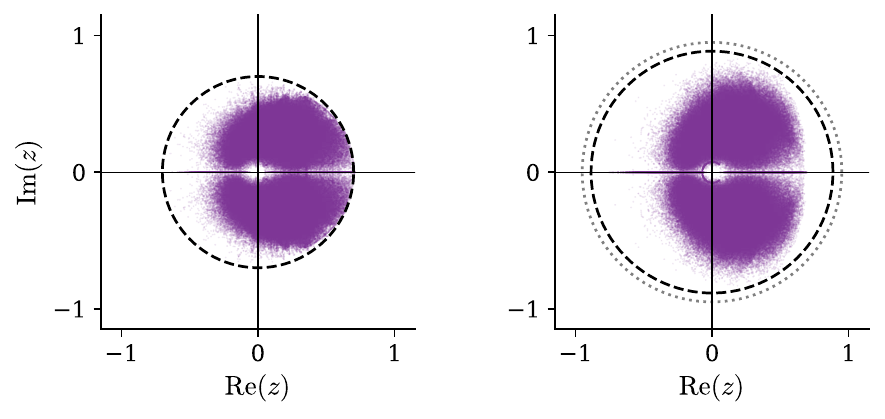}
  \caption{Left \glspl{SRG} (max pairing) in $\ell^\infty$  of 
 policy evaluation operator $T_\pi$ and regularized policy evaluation operator ${T}_{\pi,\alpha}$, with ${\alpha=0.25}$, for a randomly generated $8$-state \gls{MDP} with ${\gamma=0.7}$.
  \emph{Left:} $\SRGL({T}_\pi)$ lies in a disk of radius~${\gamma}$, confirming ${T}_\pi$ is $\gamma$-contractive.
  \emph{Right:} $\SRGL({T}_{\pi,\alpha})$ lies in a disk of radius~${\ell \approx 0.89}$, thus improving on the Lipschitz bound ${\alpha + \gamma = 0.95}$.}
  \label{fig:srg_bellman}
\end{figure}

\section{Conclusion}
\label{sec:conclusion}

We have extended the \gls{SRG} framework from Hilbert spaces to
normed spaces by replacing the inner product with a regular
pairing~\cite{proskurnikov2024regular}, whose asymmetry gives rise to directional angles and, in turn, directional \glspl{SRG}. The resulting characterizations of operator properties in terms of geometric containment tests (Theorem~\ref{thm:srg_props}) and \gls{SRG} calculus rules (Theorem~\ref{thm:srg_ops}) provide a coherent graphical framework for systems with bounded or sparse signals, as demonstrated numerically on
static maps and Bellman operators.

Several directions remain open for future work. Natural extensions include
directional \glspl{SRG} in infinite-dimensional Banach spaces, a systematic duality theory, and a feedback
stability theorem in the spirit of~\cite{chaffey2023graphical}
connecting our framework with classical control
theory~\cite{desoer1975feedback}.  On the computational side, sample
complexity bounds and efficient approximation algorithms for
constructing \glspl{SRG} from data are promising next steps. Finally, validating directional \glspl{SRG} in applications, including  $\ell^1$ optimal control~\cite{dahleh1994control}, dynamic programming~\cite{bertsekas2012dynamic,sutton2018reinforcement}, and the analysis of fixed-point iterations~\cite{Zeidler1986,Chidume2009}, is equally important.

\bibliographystyle{IEEEtran}
\bibliography{refs}

\appendix

\section{Proofs}
\label{sec:proofs}

\begin{proof}[Proof of Lemma~\ref{lem:cosine_properties}]
\ref{cos:self} Fix ${x\in X\!\setminus\!\{0\}}$.  By compatibility of the regular pairing with the norm,
${\wpair{x}{x}=\|x\|^2}$, so
$$\cos_L(x,x)=\frac{\,\|x\|^2}{\|x\|\|x\|}=1.$$ The straight angle property~\ref{rwp:straight} gives
$$\wpair{-x}{x}=-\|x\|^2,$$ hence
$$\cos_L(x,-x)=\frac{\wpair{-x}{x}}{\|x\|\|x\|}=-1 .$$

\ref{cos:scale} Fix ${x,y\in X\!\setminus\!\{0\}}$ and ${ \sigma >0}$. 
By weak homogeneity, 
$${\wpair{\sigma x}{y} = \wpair{x}{\sigma y} = \sigma\wpair{x}{y}}.$$ 
By homogeneity of the norm, one also has ${\|\sigma x\|=\sigma\|x\|}$. Then 
$${\cos_L(x,y) = \cos_L(\sigma x,y) = \cos_L(x,\sigma y)}.$$

\ref{cos:convex} Fix nonzero ${x,y,z\in X}$ and ${\alpha,\beta\geq 0}$, with ${\alpha y+\beta z\neq 0}$. Subadditivity and weak homogeneity of the regular pairing 
give 
$$\wpair{\alpha y+\beta z}{x} \le \alpha\wpair{y}{x}+\beta\wpair{z}{x}.$$ Dividing by $\norm{\alpha y+\beta z}\norm{x}$ yields the claim.

\ref{cos:defect} Let ${x,y,z\in S_X}$ and ${q=z-\wpair{z}{y}\,y}$.  By linearity in the first argument of $\wpair{\cdot}{\cdot}$, one obtains
${\wpair{q}{y}=0}$ and
$${\wpair{z}{x}=\wpair{z}{y}\,\wpair{y}{x}+\wpair{q}{x}}.$$
Since ${x,y,z\in S_X}$, the second identity can be expressed as
$${\cos_L(x,z)-\cos_L(y,z)\,\cos_L(x,y)=\wpair{q}{x}},$$ 
which, by the Cauchy--Schwarz inequality, yields the claim.

\end{proof}

\begin{proof}[Proof of Proposition~\ref{prop:lognorm_phase}]
For ${x\in S_X}$, one has
\[
  \wpair{Ax}{x}
  = \|Ax\|\|x\|\cos_L(x,Ax)
  = \|Ax\|\cos_L(x,Ax).
\]
By Lumer's identity~\eqref{eq:lumer}, taking the supremum over ${x\in S_X}$ yields~\eqref{eq:lognorm_phase}. Since $\|Ax\|\geq 0$, the sign of $\|Ax\|\cos_L(x,Ax)$ is determined by $\cos_L(x,Ax)$ whenever $Ax\neq 0$.  Hence, $\mu(A)\leq 0$ if and only if $\cos_L(x,Ax)\leq 0$ for all
$x\in S_X$ with $Ax\neq 0$, i.e.,
$\angle_L(x,Ax)\geq\pi/2$.
\end{proof}

\begin{proof}[Proof of Lemma~\ref{lem:phase_monotone}]
For any $(x,y)\in\igraph(T)$, with $x,y\neq 0$, monotonicity requires
$\wpair{y}{x}\geq 0$.  Since
$$\wpair{y}{x}=\|x\|\|y\|\cos_L(x,y)$$ 
and both norms are positive,
the sign is determined by $\cos_L(x,y)\geq 0$, which holds if and only
if 
$$\angle_L(x,y)\leq\pi/2.$$
\end{proof}

\begin{proof}[Proof of Theorem~\ref{thm:srg_props}]
Fix $(x,y)\in\igraph(T)$ with $x\neq 0$, and let ${z\in\SRGL(T)}$ be
the corresponding SRG point. Then ${|z|=\|y\|/\|x\|}$ and
\begin{equation}
\label{eq:rez_pairing}
  \Real z = \frac{\wpair{y}{x}}{~\|x\|^2}.
\end{equation}

\ref{T:Lipschitz}
By definition, ${|z|=\|y\|/\|x\|}$, so $T$ is $\ell$-Lipschitz if and only if $|z|\leq\ell$ for all
$z\in\SRGL(T)$.  The nonexpansive and contractive cases correspond
to $\ell\leq 1$ and $\ell<1$, respectively.

\ref{T:one-sided-Lipschitz}
From~\eqref{eq:rez_pairing}, $T$ is one-sided $c$-Lipschitz
if and only if $\Real z\leq c$ for every $z\in\SRGL(T)$. The dissipative case corresponds to $c=0$.

\ref{T:strongly-monotone}
From~\eqref{eq:rez_pairing}, $T$ is $\mu$-strongly monotone
if and only if $\Real z\geq\mu$ for
every $z\in\SRGL(T)$.  The monotone case corresponds to $\mu=0$.

\ref{T:cocoercive}
The operator $T$ is $\gamma$-cocoercive if 
$\wpair{y}{x}\geq\gamma\|y\|^2$ for all $(x,y)\in\igraph(T)$.
Dividing by $\|x\|^2$, using~\eqref{eq:rez_pairing}, and recalling
$|z|=\|y\|/\|x\|$ gives $\Real z\geq\gamma|z|^2$.  Writing
$$z=a+\mathrm{i}b,$$ the inequality $a\geq\gamma(a^2+b^2)$ rearranges to
$$\left(a-\tfrac{1}{2\gamma}\right)^2+b^2\leq\left(\tfrac{1}{2\gamma}\right)^2,$$ which is
the claimed disk containment.
\end{proof}

\begin{proof}[Proof of Theorem~\ref{thm:srg_ops}]

\emph{\ref{srg:scaling}~Scaling.}
For ${\alpha=0}$, 
$${\SRGL(0)=\{0\} = 0\cdot\SRGL(A)}.$$
For ${\alpha\neq 0}$, fix ${(x,y)\in\igraph(A)}$, with ${x\neq 0}$,
and let ${z\in\SRGL(A)}$ be the corresponding \gls{SRG} point, so that
$${|z|=\frac{\|y\|}{\|x\|}}$$ and $${\Real z=\frac{\wpair{y}{x}}{\|x\|^2}}.$$
Then ${(x,\alpha y)\in\igraph(\alpha A)}$ implies
${z'\in\SRGL(\alpha A)}$, with
$${|z'|=\frac{\|\alpha y\|}{\|x\|}=|\alpha|\,|z|}.$$
By linearity in the first argument of $\wpair{\cdot}{\cdot}$,
$${\cos_L(x,\alpha y)=\sign(\alpha)\cos_L(x,y)},$$
so conjugate symmetry gives ${z'=\alpha z}$.

\emph{\ref{srg:inversion}~Inversion.}
Let ${(x,y)\in\igraph(A)}$ with ${x,y\neq 0}$.
Then ${(y,x)\in\igraph(A^{-1})}$ yields
${z'\in\SRGL(A^{-1})}$ with
\[
  |z'|=\frac{\|x\|}{\|y\|},
  \qquad
  \angle z'=\angle_R(x,y).
\]
Similarly, ${(x,y)\in\igraph(A)}$ yields ${z\in\SRGR(A)}$ with
\[
  |z|=\frac{\|y\|}{\|x\|},
  \qquad
  \angle z=\angle_L(y,x).
\]
Since ${\angle_R(x,y)=\angle_L(y,x)}$, one obtains ${z'=1/\bar{z}}$.
Moreover, ${\infty\in\SRGR(A)}$ if and only if $A$ is
multi-valued, \textit{i.e.}, ${0\in\SRGL(A^{-1})}$.
Similarly, ${0\in\SRGR(A)}$ if and only if
${\infty\in\SRGL(A^{-1})}$.
The claim follows by conjugate symmetry.

\emph{\ref{srg:addition}~Addition.}
Let ${(x,y_A)\in\igraph(A)}$, ${(x,y_B)\in\igraph(B)}$,
so that ${(x,y_A+y_B)\in\igraph(A+B)}$, with
corresponding points ${z_A,z_B,z}$.
If ${x=0}$, then 
$${z=\infty\in\SRGL(A)\boxplus\SRGL(B)},$$
since ${\infty\in\SRGL(A)}$.
If ${x\neq 0}$, linearity of $\wpair{\cdot}{\cdot}$ in its
first argument gives
\begin{align*}
  \Real z
  &=\frac{\wpair{y_A+y_B}{x}}{\norm{x}^2} \\
  &=\frac{\wpair{y_A}{x}+\wpair{y_B}{x}}{\norm{x}^2} \\
  &=\Real z_A+\Real z_B.
\end{align*}
The triangle and reverse-triangle inequalities give
\[
  \bigl|\norm{y_A}-\norm{y_B}\bigr|
  \leq\norm{y_A+y_B}
  \leq\norm{y_A}+\norm{y_B}.
\]
Dividing by ${\norm{x}}$ yields
$${\bigl||z_A|-|z_B|\bigr|\leq|z|\leq|z_A|+|z_B|}.$$

\emph{\ref{srg:composition}~Composition.}
Let ${(x,y)\in\igraph(B)}$, ${(y,z)\in\igraph(A)}$,
so that ${(x,z)\in\igraph(AB)}$, with corresponding
points ${w_B,w_A,w}$.
If ${x=0}$ or ${y=0}$, then 
$${w=\infty\in\SRGL(A)\diamond\SRGL(B)},$$
since ${\infty\in\SRGL(A)}$ or ${\infty\in\SRGL(B)}$.
If ${x,y,z\neq 0}$, then
\[
  |w|=\frac{\norm{z}}{\norm{x}}
  =\frac{\norm{z}}{\norm{y}}\,\frac{\norm{y}}{\norm{x}}
  =|w_A|\,|w_B|.
\]
By Lemma~\ref{lem:cosine_properties},
with ${\hat{y}=y/\norm{y}}$ and ${\hat{z}=z/\norm{z}}$,
\begin{align*}
  \bigl|\cos_L(x,z)-\cos_L(y,z)\,\cos_L(x,y)\bigr|
  &\leq\norm{\hat{z}-\cos_L(y,z)\,\hat{y}}\\
  &\leq\sigma_A,
\end{align*}
where the second inequality follows from the definition
of~$\sigma_A$.
Since ${\Real w=|w|\cos_L(x,z)}$,
$\Real w_A=|w_A|\cos_L(y,z)$, and
$\Real w_B=|w_B|\cos_L(x,y)$, multiplying by
$|w|=|w_A|\,|w_B|$ gives
$${\bigl|\Real w-\Real w_A\,\Real w_B\bigr|
\leq\sigma_A\,|w|}.$$

\end{proof}

\end{document}